\documentclass[12pt, oneside,reqno]{amsproc}   	
\usepackage{geometry}                		
\geometry{letterpaper}                   		
\usepackage{graphicx}				
\usepackage{amssymb}
\usepackage{amsmath}
\usepackage[all]{xy}

\newtheorem{thm}{Theorem}[section]
\newtheorem{lem}[thm]{Lemma}

\newtheorem{prop}[thm]{Proposition}

\numberwithin{equation}{section}


\def\Pb{\ifmmode{\Bbb P}\else{$\Bbb P$}\fi}
\def\Z{\ifmmode{\Bbb Z}\else{$\Bbb Z$}\fi}
\def\C{\ifmmode{\Bbb C}\else{$\Bbb C$}\fi}
\def\R{\ifmmode{\Bbb R}\else{$\Bbb R$}\fi}
\def\S{\ifmmode{S^2}\else{$S^2$}\fi}

\def\S{\cal S}

\linespread{0.955}

\begin{document}
	
\title[Asymptotics]{Precise asymptotics near a generic $\mathbb S^1\times\mathbb R^3$ 
singularity of mean curvature flow}

\begin{abstract} In the present paper we study a type of generic singularity of mean curvature flow modelled on the bubble-sheet $\mathbb S^1\times\mathbb R^3$ , and we derive an asymptotic profile for a neighborhood of singularity. 
\end{abstract}

\author{Zhou Gang$^\star$\footnote{$^\star$ partly supported by Simon collaborative grant 709542}}
\address{Department of mathematics and statistics\\
	Binghamton University}
	\email{gangzhou@binghamton.edu}
\author{Shengwen Wang} 
\address{School of Mathematical Sciences\\
	Queen Mary University of London}
	\email{shengwen.wang@qmul.ac.uk}

\maketitle

\section{Introduction}
We consider a family of hypersurfaces $\{M^n_t\subset\mathbb R^{n+1}\}$ evolving by mean curvature flow (MCF), which makes it satisfy the equation
\begin{align}
\frac{d\mathbf X_M}{dt}^\perp=\mathbf H_M.
\end{align} where $\mathbf X_{M}$ is the position vector and $\mathbf H_M$ is the mean curvature vector. For a rich family of initial hypersurfaces (in particular all closed hypersurfaces), singularities will form in a finite time. A main theme in the study of MCF or more general geometric flows is to understand the formation of singularities and find a way to extend the flows through singularities with controlled geometry and topology. 

According to Huisken's monotonicity formula \cite{H}, the asymptotic behaviour of mean curvature flow near a singularity is modelled on self shrinkers (possibly with multiplicities). Self shrinkers are hypersurfaces satisfying the equation $$\mathbf H_M+\frac{1}{2}\mathbf X_M^\perp=0,$$ and by the result of Colding-Minicozzi \cite{CM}, the generic singularities must be modelled on mean convex self shrinkers $\mathbb S^k_{\sqrt{2k}}\times\mathbb R^{n-k}$ for $1\leq k\leq n-1$. When the evolving hypersurfaces are 2-convex, there is a well developed theory for mean curvature flow with surgeries \cite{HK, HS}, with a detailed description near a neighborhood of the singularities and surgery procedure with controlled geometry and topology. For general hypersurfaces, the singularity analysis is hard. However, we note that there have been some breakthroughs in singularity analysis in the past few years, especially in dimension 3. By \cite{CHH, CHHW}, any neck singularities, i.e. modeled on $\mathbb S^{n-1}\times\mathbb R$, must have a mean convex neighbourhood in space-time. Based on this, one can construct smooth MCF with surgery if all the singularities are neck-type by \cite{D}. For generic initial data, \cite{CCMS1, CCMS2} showed that the flow only develop cylindrical or spherical singularities. More recently, Bamler-Kleiner \cite{BK} verified Ilmanen's conjecture \cite{I2} that the tangent flows at singularities always appear as a multiplicity 1 self shrinker in dimension 3. Combining previous breakthroughs and Brendle's classification of genus zero shrinkers, we have the well-posedness of mean curvature flow from topological spheres in $\mathbb R^3$. We also note that when the ambient dimension is 4 and entropy is small, there are also recent results on controlling the topological changes across singularities \cite{BW, BWang, CCMS1, CCMS2, MW}. In the present paper we would like to give a precise asymptotic for a class of generic bubble-sheet singularities modelled on $\mathbb S^1\times\mathbb R^3$ at the singular time in $\mathbb R^5$. In general, there is no canonical neighborhood theorem known for  bubble-sheet type singularities $\mathbb S^k\times\mathbb R^{n-k}$ with $k>1$ and no theory of performing surgery through these singularities. Recently, the first author \cite{GZ} gives a description in a certain regime of a neighborhood of the bubble-sheet singularities modeled on $\mathbb S^1\times \mathbb R^3$. We will use that result to derive a precise asymptotic of a generic singularity modeled on $\mathbb S^1\times\mathbb R^3$.  



We start with formulating the problem. In this paper, we consider MCF of four dimensional hypersurfaces $M^4_t$ in $\mathbb R^5$ that develops a generic singularity modeled on the self-shrinking cylinder $\mathbb S^1_{\sqrt{2}}\times\mathbb R^3$ at the space-time point $(0,0)$. The result of Colding and Minicozzi \cite{CM1} implies the uniqueness of a fixed axis. Up to a rotation, we can assume the unique singularity model is the cylinder 
\begin{align}
C_1=\Big\{\begin{bmatrix}y\\ \sqrt2\cos\theta \\ \sqrt2\sin\theta\end{bmatrix}\in\mathbb R^5\ \Big|\ y\in\mathbb R^3,\ \theta\in [0,2\pi)\Big\}.
\end{align}
In a neighborhood of the blowup point MCF can also be parametrized by 
\begin{align}
M_t = \begin{bmatrix}x\\ u(x,\theta,t)\cos\theta \\ u(x,\theta,t)\sin\theta \end{bmatrix},\ x\in U_t\subset\mathbb R^3,\ \theta\in [0,2\pi)\label{unscaled}
\end{align}

Consider a time reparametrization $\tau:=-\ln|t|$ and define a rescaled MCF by $$\tilde M_\tau=\sqrt{-t}M_t.$$ 
Hence the time $\tau$ slice of rescaled flow $\tilde{M}_\tau$ can be locally parametrized by 
\begin{align}
\tilde{M}_\tau = \begin{bmatrix}y\\ v(y,\theta,\tau)\cos\theta \\ v(y,\theta,\tau)\sin\theta \end{bmatrix}, \ y\in U_\tau\subset\mathbb R^3, \  \theta\in [0,2\pi), 
\end{align} where the function $v$ is defined in terms of $u$ by
\begin{align}\label{eq:uvRe}
v(y,\theta,\tau)=\frac{1}{\sqrt{-t}}u(x,\theta,t),
\end{align}
with $y\in \mathbb{R}^3$  defined as
\begin{align}
 y:=\frac{x}{\sqrt{-t}}.
\end{align}
Under our assumption, this rescaled MCF converges (without passing to a subsequence) to the unique cylinder $C_1$ as measures on any compact subset, see \cite{CM1}.

Before stating our main theorem, we would like to record the following result of the first author \cite{GZ} on the asymptotics before the singular time of a generic singularity modelled on $\mathbb S^1\times\mathbb R^3$.

\begin{thm}\label{GZthm}[Theorem 2.1 of \cite{GZ}] Let $M^4_t$ be a MCF in $\mathbb R^5$ that develops a generic singularity at the space-time $(0,0)\subset\mathbb R^5\times\mathbb R$ and is modeled on the self-shrinker $C_1$ defined above.

Then there exist two independent positive constants $\tau_0\gg1$ and $C$ such that for $\tau\geq\tau_0$,  $v$ has the following asymptotic
\begin{equation}\label{asym}
v(y,\theta,\tau)=\sqrt{\frac{2+y^T\cdot B(\tau)\cdot y}{2a(\tau)}}+\eta(y,\theta,\tau),\ \text{for}\ |y|\leq \Omega(\tau),
\end{equation}
where $y\in\mathbb R^3$ and $\theta\in [0,2\pi)$, $B$ is a $3\times 3$ symmetric real matrix satisfying
\begin{align}\label{eq:b123}
B=\tau^{-1}\left[
\begin{array}{ccc}
b_1&0&0\\
0&b_2&0\\
0&0&b_3
\end{array}\right]+O(\tau^{-2}),\ b_k=0\ \text{or}\ 1,\  \text{and}\ k=1,2,3,
\end{align} $a$ is a scalar function satisfying
\begin{align}\label{eq:aaa}
a(\tau)=\frac{1}{2}+O(\tau^{-1})
\end{align}
and $\eta$ satisfies the estimate
\begin{align}\label{eq:InftyEst}
\begin{split}
\|\left(\sqrt{1+y^2}\right)^{-3}1_{\Omega}\partial_{\theta}^{l}\eta(\cdot,\tau)\|_{\infty}\leq &C \tau^{-2},\ l=0,1,2,\\
\|\left(\sqrt{1+y^2}\right)^{-2}1_{\Omega}\partial_{\theta}^{l}\nabla_{y}\eta(\cdot,\tau)\|_{\infty}\leq &C \Omega^{-3},\ l=0,1,\\
\|\left(\sqrt{1+y^2}\right)^{-1}1_{\Omega}\nabla_{y}^{k}\eta(\cdot,\tau)\|_{\infty}\leq &C \Omega^{-2},\ |k|=2.
\end{split}
\end{align}

\end{thm}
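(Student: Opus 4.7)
The plan is to analyze directly the evolution equation satisfied by $v$ in the rescaled variables $(y,\theta,\tau)$. Substituting the rescaling \eqref{eq:uvRe} into the mean curvature flow equation produces, for $v$, a quasilinear parabolic equation whose linearization at the steady state $v \equiv \sqrt{2}$ (the profile of the cylinder $C_1$) is an Ornstein--Uhlenbeck type operator $\mathcal{L}$ on $\R^3 \times S^1$ naturally acting on the Gaussian-weighted space $L^2(e^{-|y|^2/4}\, dy\, d\theta)$. Writing $v = \sqrt{2} + w$, the task becomes to identify the leading asymptotics of $w$ and then control the remainder in weighted norms on the growing region $|y| \leq \Sigma(\tau)$.

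As in the Colding--Minicozzi framework, the spectrum of $\mathcal{L}$ is indexed by Hermite polynomials in $y$ tensored with Fourier modes in $\theta$. The unstable eigenspaces (constants and linear in $y$) are eliminated by the normalization that the singularity occurs at the space-time point $(0,0)$, while the $\theta$-dependent neutral modes $\cos 2\theta, \sin 2\theta$ are forced to decay faster than any power of $\tau^{-1}$ by the uniqueness of the cylindrical tangent flow \cite{CM1}. The remaining neutral eigenspace is spanned by $\{y_iy_j - 2\delta_{ij}\}_{i,j=1}^{3}$, so the neutral projection of $w$ is encoded in a symmetric $3 \times 3$ matrix $B(\tau)$, which may be diagonalized by a $\tau$-independent rotation. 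Projecting the equation onto this neutral eigenspace produces, at leading order, a system of Riccati-type ODEs $\dot b_k = -c\, b_k^2 + $ l.o.t., whose nontrivial integrable solutions decay like $\tau^{-1}$ with a universal coefficient which, after normalization, gives $b_k \in \{0,1\}$ as in \eqref{eq:b123}. The explicit profile $\sqrt{(2 + y^T B y)/(2 a(\tau))}$ is then the natural approximate shrinker-type solution that balances, through a single scalar $a(\tau)$, the neutral-mode evolution against the trace contribution in the constant direction, producing \eqref{eq:aaa}.

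The main obstacle is the control of the remainder $\eta$ in the three weighted $L^\infty$ norms stated in \eqref{eq:InftyEst}. I would first modulate the parameters $B(\tau)$ and $a(\tau)$ so that $\eta$ carries no neutral-mode content, thereby extracting a spectral-gap decay estimate for $\eta$ in the Gaussian-weighted $L^2$ norm. Upgrading this to pointwise $\tau^{-2}\langle y\rangle^{3}$ bounds on compact $|y|$-regions is standard parabolic Schauder, but extending the estimates to the growing region $|y| \leq \Sigma(\tau)$ requires an additional rescaling argument: one trades Gaussian decay in the inner region for polynomial-in-$\Sigma(\tau)$ decay in an intermediate region by running interior parabolic estimates on scale-$1$ balls together with the explicit form of the approximate profile, then patches the two regions along an overlap. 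The delicate part is handling the nonlinearity, which contains quadratic terms in $w$ and $\nabla w$ that are not a priori small uniformly out to $|y| \sim \Sigma(\tau)$; the key mechanism is that the leading growth of $w$ in that region is precisely absorbed into the explicit profile, so the feedback of the nonlinearity onto $\eta$ is genuinely lower order and a bootstrap closes.
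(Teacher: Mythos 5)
This statement is not proved in the paper at all: it is quoted verbatim as Theorem 2.1 of \cite{GZ}, and the present paper only uses it as a black box. So the only meaningful comparison is with the strategy of \cite{GZ}, and your outline does follow that general strategy (modulation around an adiabatic cylindrical profile, spectral decomposition of the Ornstein--Uhlenbeck-type linearization in a Gaussian-weighted space, then weighted $L^\infty$ estimates pushed out to the growing region). As a proof, however, it has concrete problems. First, the spectral bookkeeping is wrong for the cylinder $\mathbb S^1_{\sqrt 2}\times\mathbb R^3$: the linearization at $v\equiv\sqrt2$ is $\Delta_y+\tfrac12\partial_\theta^2-\tfrac12 y\cdot\nabla_y+1$, whose mode $y^\alpha e^{ik\theta}$ has eigenvalue $1-\tfrac{|\alpha|}{2}-\tfrac{k^2}{2}$. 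Hence $\cos2\theta,\sin2\theta$ are strictly stable (eigenvalue $-1$) and decay for free, not by uniqueness of the tangent flow; the $\theta$-dependent modes that actually cause trouble are the unstable $\cos\theta,\sin\theta$ (transverse translations) and the genuinely neutral $y_j\cos\theta,\ y_j\sin\theta$ (tilting of the axis), and it is for the latter that the uniqueness-of-axis input from \cite{CM1} (or an internal modulation of the axis) is needed. Your ``remaining neutral eigenspace is spanned by $y_iy_j-2\delta_{ij}$'' is therefore incomplete as stated, and the argument does not close without handling the tilting modes explicitly.

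Second, the two hardest points of the theorem are only gestured at. The dichotomy $b_k\in\{0,1\}$ requires showing that each neutral-mode coefficient either follows the universal Riccati behavior $\sim c/\tau$ or decays strictly faster (so that it is absorbed into the $O(\tau^{-2})$ error); ``nontrivial integrable solutions decay like $\tau^{-1}$'' does not by itself rule out intermediate behaviors, and this is a delicate step. Likewise, the specific estimates \eqref{eq:InftyEst}, with weights $\langle y\rangle^{-3},\langle y\rangle^{-2},\langle y\rangle^{-1}$ and bounds $\tau^{-2},\Omega^{-3},\Omega^{-2}$ valid out to $|y|\le\Omega(\tau)\sim\tau^{11/20}$, require genuine propagator estimates for the non-self-adjoint linearized operator in exactly these weighted spaces, together with a bootstrap controlling the quadratic terms where $v$ itself is large of order $\tau^{1/20}$; ``standard parabolic Schauder plus patching an intermediate region'' names the difficulty but supplies no mechanism for why the stated powers of $\tau$ and $\Omega$ come out. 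So the proposal is a reasonable sketch of the known approach, but with the misidentified spectrum and the unproved dichotomy and weighted decay estimates it is not a proof of the quoted theorem.
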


Here $1_{\leq \Omega}$ is the Heaviside function defined as
\begin{align}
1_{\leq \Omega}(y)=\left[
\begin{array}{ll}
1\ &\ \text{if}\ |y|\leq \Omega,\\
0\ & \ \text{otherwise.}
\end{array}
\right.
\end{align}
and the scalar function $\Omega$ is defined as
\begin{align}
\Omega(\tau):=\sqrt{100 \ln \tau+9 (\tau-\tau_0)^{\frac{11}{10}}}.\label{eq:defOmega}
\end{align}

Under a further generic condition, specific in \eqref{eq:b123},
\begin{equation}\label{generic}
b_1=b_2=b_3=1,
\end{equation}
it is shown in \cite{GZ} Theorem 2.3 that MCF develops an isolated singularity of bubble-sheet type verifying Ilmanen's mean-convex neighborhood conjecture in this case. We would like to give a more precise asymptote near such singularities, and hopefully this will help the development of a surgery theory for bubble-sheet type singularities.

The main theorem is
\begin{thm}\label{main}
Let $\{M^4_t\subset\mathbb R^5\}$ be an evolution of MCF. Assume that it develops a singularity at space-time $(0,0)$, modeled on a cylindrical self-shrinker 
\begin{align}
\mathbb{S}^{1}_{\sqrt{2}} \times \mathbb{R}^3=\left(
\begin{array}{cc}
x\\
\sqrt2\cos(\theta)\\
\sqrt2\sin(\theta)
\end{array}
\right),\ \text{for}\ x\in \mathbb{R}^3.
\end{align} 

Under the generic assumption \eqref{generic}, in a neighborhood $B_\epsilon(0)\subset\mathbb R^5$ of the origin, and at the singular time $t=0$, $M_0$ can be parametrized by\begin{align}
\left(
\begin{array}{cc}
x\\
u_0(x,\theta)\cos(\theta)\\
u_0(x,\theta)\sin(\theta)
\end{array}\right)
\in\mathbb{R}^5,\ x\in\mathbb{R}^3,\ \theta\in [0,2\pi),
\end{align} where $u_0$ is some nonnegative function,
and satisfies the asymptotics 
\begin{align}
u_0(x,\theta)=\frac{|x|}{\sqrt{-2\ln|x|}}(1+o(1)), \ \text{where}\ o(1)\rightarrow 0\ \text{as}\ |x|\rightarrow 0.
\end{align}
\end{thm}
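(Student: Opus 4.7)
The plan is to read off the final-time profile $u_{0}(x,\theta)$ from Theorem~\ref{GZthm} by evaluating the unscaled radius $u(x,\theta,t)$ at a carefully chosen time $t_{\ast}(x)$ tending to $0^{-}$ as $|x|\to 0$, and then to argue that the remaining evolution up to the singular time is only a $(1+o(1))$ perturbation.

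Under the generic hypothesis \eqref{generic} one has $B=\tau^{-1}I+O(\tau^{-2})$ and $a(\tau)=\tfrac{1}{2}+O(\tau^{-1})$; substituting into \eqref{asym} and using the $\eta$-bound from \eqref{eq:InftyEst} gives the leading-order expansion $v(y,\theta,\tau)^{2}=2+|y|^{2}/\tau+(\text{lower-order terms})$. Unwinding through $u=\sqrt{-t}\,v$, $y=x/\sqrt{-t}$, $\tau=-\log|t|$ yields
\[
u(x,\theta,t)^{2} \,=\, -2t+\frac{|x|^{2}}{-\log|t|}+(\text{lower-order terms})
\]
throughout the region where Theorem~\ref{GZthm} applies. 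For each sufficiently small $x$, I would choose $t_{\ast}(x)<0$ by the balance condition $-2t_{\ast}(x)=|x|^{2}/(-\log|t_{\ast}(x)|)$ (equivalently $|y_{\ast}|^{2}=2\tau_{\ast}$ in rescaled variables); iterating shows $-\log|t_{\ast}(x)|=-2\log|x|+O(\log\log(1/|x|))$, placing $(x,t_{\ast}(x))$ safely inside the validity window, and direct substitution gives $u(x,\theta,t_{\ast}(x))=\tfrac{|x|}{\sqrt{-\log|x|}}(1+o(1))$.

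The main obstacle is to verify $u_{0}(x,\theta)=u(x,\theta,t_{\ast}(x))(1+o(1))$. The interval $[t_{\ast}(x),0]$ splits into an inner portion on which $(x,t)$ is still covered by Theorem~\ref{GZthm}---where $\partial_{t}u$ can be controlled by differentiating the leading-order expansion and invoking the bounds on $\nabla_{y}\eta$ and $\nabla_{y}^{2}\eta$ in \eqref{eq:InftyEst}---and a terminal portion where $(x,t)$ has left that window. On the terminal portion one must bring in external input---either pseudolocality of MCF combined with smoothness of $M_{t}$ near $(x,u_{0}(x,\theta)\cos\theta,u_{0}(x,\theta)\sin\theta)$, or a barrier comparison with an explicit model---to rule out net drift in $u$ larger than $o(|x|/\sqrt{-\log|x|})$ over the remaining (very short) time. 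This propagation estimate is precisely what the key technical lemma of Section~\ref{sec:KeyLem} should provide; combined with the computation above, it yields the asserted asymptotic.
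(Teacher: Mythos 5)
Your overall scheme---read $u$ off at an intermediate time from Theorem \ref{GZthm} and then argue that the remaining evolution changes $u$ only by a factor $1+o(1)$---is the same as the paper's, but your matching time $t_{*}$ sits at the wrong scale, and the propagation step fails there. At your balance time $-2t_{*}=|x|^{2}/(-\log|t_{*}|)$, i.e. $|y_{*}|^{2}=2\tau_{*}$, the two terms in $u^{2}\approx 2(-t)+|x|^{2}/\tau$ are equal, so the cylindrical part $2(-t)$ of the radius is not yet negligible: the point is still in the shrinking regime. Indeed Theorem \ref{GZthm} is still valid at the paper's later time $t_{1}$, defined by \eqref{eq:defX1} (so $|y_{1}|=\tau_{1}^{1/2+1/20}$, still inside the window $|y|\le\Omega(\tau)\approx 3\tau^{11/20}$), and evaluating \eqref{eq:mainP} at both times gives
\[
\frac{u^{2}(x,\theta,t_{*})}{u^{2}(x,\theta,t_{1})}
=\frac{2(-t_{*})+|x|^{2}/\tau_{*}}{2(-t_{1})+|x|^{2}/\tau_{1}}\,(1+o(1))
=\frac{2|x|^{2}/\tau_{*}}{(|x|^{2}/\tau_{1})\bigl(1+2\tau_{1}^{-1/10}\bigr)}\,(1+o(1))\longrightarrow 2,
\]
since $\tau_{1}/\tau_{*}\to 1$. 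Hence $u(x,\cdot,\cdot)$ still drops by a factor tending to $\sqrt2$ after $t_{*}$, and your claim $u_{0}=u(\cdot,t_{*})(1+o(1))$ is false. Neither pseudolocality nor a barrier can rescue it: at $t_{*}$ the remaining time satisfies $-t_{*}\approx u^{2}(x,t_{*})/4$, i.e. it is comparable to the square of the local radius, exactly the borderline case where no uniform freeze-out holds; and your proposed control of the ``inner portion'' by differentiating the expansion would exhibit this order-one change rather than bound it by $o(1)$. The paper avoids this by matching only at $t_{1}$, where $|y_{1}|^{2}/\tau_{1}=\tau_{1}^{1/10}\gg 1$, so $2(-t_{1})$ is already negligible relative to $u^{2}(x,t_{1})$ and the residual time $e^{-\tau_{1}}$ is $o(u^{2})$; after a further parabolic rescaling normalizing the local radius to $\approx 1$ (so the residual time becomes $\tau_{1}^{-1/10}\to 0$ and the nearby surface is an almost-cylindrical graph with small derivatives), pseudolocality yields Lemma \ref{keylemma}.

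A caveat on constants: your final expression coincides with the theorem as stated, but only because of the invalid propagation step. Matching at $t_{1}$ and using \eqref{tau}, i.e. $\tau_{1}=2(-\log|x_{1}|)(1+o(1))$, the computation gives $u_{0}=|x|/\sqrt{-2\log|x|}\,(1+o(1))$; the paper's final display silently replaces $\tau_{1}$ by $-\log|x_{1}|$, so there is a $\sqrt2$ tension between statement and proof which your argument does not legitimately resolve.
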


For the known works, in \cite{AK}, Angenent and Knopf has obtained a similar asymptotic for rotationally symmetric Ricci flow neck-pinch singularities.

\textbf{Acknowledgements.}
We would also like to thank the referee for the comments and suggestions that help improve our manuscript.

\section{Extension of the asymptotic profile to the singular time}\label{sec:ProMain}
To prepare for the proof we need a technical result, specifically, Lemma \ref{keylemma} below. It tells us that the asymptotic profile in Theorem \ref{GZthm}, which holds in a shrinking neighbourhood for the unscaled flow towards the singular time, can be extended up to the singular time. 

Before stating it we define some constants. For any fixed $x_1\in \mathbb{R}^3$, with $0<|x_1|\ll 1$, define $\tau_1$ to be the large number such that 
\begin{align}
e^\frac{\tau_1}{2}|x_1|=\tau_1^{\frac{1}{2}+\frac{1}{20}}.\label{eq:defX1}
\end{align} It is not hard to see that such a number uniquely exists for $|x_1|$ small enough since $e^{\frac{\tau_1}{2}}$ grows much faster than $\tau_1^{\frac{1}{2}+\frac{1}{20}}$ as $\tau_{1}$ grows.
We denote, by $t_1$, the time for MCF corresponding to the rescaled time $\tau_1$, which makes
\begin{align}
t_1=t_1(\tau_1)=-e^{-\tau_1}.\label{eq:twoScal}
\end{align}

The following result is important for proving Theorem \ref{main}.
\begin{lem}\label{keylemma}
Suppose that all the conditions in Theorem \ref{main} hold. 

For any $\epsilon>0$, there exits a small constant $\delta>0$ such that if $0< |x_1|<\delta$, then the function $u(x,\theta,t)$ in \eqref{unscaled} satisfies the following estimate at $x=x_1$: when $t\in [t_1,0)$, with $t_1$ defined in \eqref{eq:defX1} and \eqref{eq:twoScal}, 
\begin{align}
\Big|\frac{u(x_1,\theta,t)}{u(x_1,\theta,t_1)}-1\Big|<\epsilon.
\end{align}

\end{lem}
The intuitive idea behind Lemma \ref{keylemma} is not difficult. Recall that $x=0$ is the only blowup point, and the blowup time is $t=0$. Thus, at least intuitively, for any $x_1\not=0$, there exists a time $t_1<0$ such that for any $t\in [t_1,0)$, $u(x_1,\theta,t)$ stays away from $0$ and varies very little, and hence $\frac{u(x_1,\theta,t)}{u(x_1,\theta,t_1)}\approx 1.$ Moreover $\frac{u(x_1,\theta,t)}{u(x_1,\theta,t_1)}\rightarrow 1$ as $t\geq t_1\rightarrow 0.$

We start with reformulating Lemma \ref{keylemma} into Proposition \ref{prop:hh} below. 

Since $\displaystyle\lim_{|x|,|t|\rightarrow 0^{+}}u(x,\theta,t)\rightarrow 0$, to prove Lemma \ref{keylemma} we have to compare two  small quantities, whose derivatives are adversely large. To make our proof transparent, we rescale the MCF to get a new flow \eqref{eq:hMCF}, such that the rescaled version of $u(x_1, \theta, t_1)$ is approximately $1.$ 

Define a new MCF $\{M_{1,s}\}$ with $s\in [0,\tau_1^{-\frac{1}{10}})$, in terms of MCF $\{M_{t}\}$ by
\begin{align}\label{eq:hMCF}
M_{1,s}:=\frac{1}{\sqrt{\tau_1^\frac{1}{10}\cdot(-t_1)}}M_{t_1+s\tau_1^\frac{1}{10}\cdot(-t_1)}
\end{align}
by rescaling, and translation, in time $t$. Here $\tau_1$ chosen from \eqref{eq:twoScal} and the new time variable $s$ is defined by the following identity
\begin{align}
t=t_1+s\tau_1^\frac{1}{10}\cdot(-t_1).\label{eq:defst}
\end{align}

For the new MCF $\{M_{1,s}\}$, it blows up at time $s=\tau_1^{-\frac{1}{10}}$ by rescaling.
We also record that the $s=0, s=-1$ time slices of the new flow are given by
\begin{align}\label{TimeSlicesRescaledFlow}
M_{1,0}&=\frac{1}{\sqrt{\tau_1^\frac{1}{10}\cdot(-t_1)}}M_{t_1},\\\nonumber
M_{1,-1}&=\frac{1}{\sqrt{\tau_1^\frac{1}{10}\cdot(-t_1)}}M_{t_1+\tau_1^\frac{1}{10}\cdot t_1}.
\end{align} Recall that MCF $\{M_t\}$ blows up at the space-time origin $(0,0)$; $t_1<0$ is a fixed time very close to the blowup time 0, thus $|t_1|\ll 1$; and $\tau_1=-\ln(-t_1)$ is the corresponding time for the rescaled MCF.

The new MCF $M_{1,s}$ can be locally parametrized similarly to the original $M_{t}$,
\begin{align}\label{eq:paraM1s}
M_{1,s}=\begin{bmatrix}z\\ h(z,\theta,s)\cos\theta \\ h(z,\theta,s)\sin\theta \end{bmatrix}
\end{align} where the variable $z$ is defined as
\begin{align}\label{eq:defZH}
z:=\frac{1}{\sqrt{\tau_1^\frac{1}{10}\cdot(-t_1)}}x,
\end{align} and $h$ is a function obtained by rescaling $u,$
\begin{align}\label{eq:relahu}
h(z,\theta,s):=\frac{1}{\sqrt{\tau_1^\frac{1}{10}\cdot(-t_1)}}u(x,\theta,t).
\end{align}

The corresponding part for $x_1\in \mathbb{R}^3$ in \eqref{eq:defX1}, denoted by $z_1$, is 
\begin{align}\label{eq:defZ1}
z_1:=\frac{1}{\sqrt{\tau_1^\frac{1}{10}\cdot(-t_1)}}x_1,
\end{align}
whose length satisfies
\begin{align}\label{eq:lengZ1}
|z_1|=\frac{1}{\sqrt{\tau_1^\frac{1}{10}\cdot(-t_1)}}\cdot\sqrt{-t_1}\tau_1^{\frac{1}{2}+\frac{1}{20}}
=\tau_1^{\frac{1}{2}}.
\end{align}

The identities in \eqref{eq:defst} and \eqref{eq:defZH} -\eqref{eq:defZ1} imply that, 
\begin{align}
\frac{u(x_1,\theta,t)}{u(x_1,\theta,t_1)}=\frac{h(z_1,\theta,s)}{h(z_1,\theta,0)}.
\end{align} Consequently Lemma \ref{keylemma} is equivalent to the following result: recall that $M_{1,s}$, $s\in [0,\ \tau_1^{-\frac{1}{10}}),$ blows up at the time $s=\tau_1^{-\frac{1}{10}}\ll 1.$
\begin{prop}\label{prop:hh} For any $s\in [0,\tau_1^{-\frac{1}{10}})$,
\begin{align}
\frac{h(z_1,\theta,s)}{h(z_1,\theta,0)}=1+o(1),\label{eq:newRa}
\end{align} where $\|o(1)\|_{L^{\infty}_{\theta,s}}\rightarrow 0$ as $\tau_1\rightarrow \infty$.
\end{prop}

\begin{proof}
Many estimates for $h$ are derived from those for $v$ through the following identity, implied by the identities \eqref{eq:relahu} and \eqref{eq:uvRe},
\begin{align}
h(z,\theta,s)=&\frac{\sqrt{-t}}{\sqrt{\tau_1^\frac{1}{10}\cdot(-t_1)}} v(y,\theta,\tau)\nonumber\\
=&\frac{\sqrt{-t}}{\sqrt{\tau_1^\frac{1}{10}\cdot(-t_1)}}v(\frac{x}{\sqrt{-t}},\theta,-ln(-t))\nonumber\\
=&\frac{\sqrt{1-\tau_1^{\frac{1}{10}}s}}{\tau_{1}^{\frac{1}{20}}}\ v\big(\frac{\tau_1^{\frac{1}{20}} z}{\sqrt{1-\tau_1^{\frac{1}{10}}s}},\theta, -\ln(1-\tau_1^{\frac{1}{10}}s)+\tau_1\big),\label{eq:forHz}
\end{align} where in the last step we used the definitions of $z$ and $s$ in \eqref{eq:defZH} and \eqref{eq:defst} respectively.

The main tool in proving the desired result is pseudo-locality. To make it applicable we extend the definition of $h$ from $s\in [0,\tau_1^{-\frac{1}{10}})$ to $s\in [-1,\tau_{1}^{-\frac{1}{10}});$ and when $s\in [-1,0]$, the estimates for $v$ provide the needed ones for $h$ through \eqref{eq:forHz}. The reason we can extend to $s=-1$ is that by \eqref{TimeSlicesRescaledFlow}, the time $s=-1$ in the rescaled flow $M_{1,s}$ corresponds to time $t_1+\tau_1^\frac{1}{10} t_1$ in the original flow. Since $\tau_1=-\ln(-t_1)$, we have $t_1+\tau_1^\frac{1}{10} t_1\rightarrow0$ as $t_1\rightarrow0$. Thus we may assume without loss of generality that the asymptotic in Theorem \ref{GZthm} is valid for $v$ and thus $h$ at $s=-1$ when we have small enough $|t_1|$ so that 
\begin{align}
\bar\tau_1:=-\ln\left[-(t_1+\tau_1^\frac{1}{10} t_1)\right] >\tau_0.
\end{align}
Moreover,  for any $z$ such that $\Big||z|-|z_1|\Big|\leq 2$, one has by \eqref{eq:defZH} and \eqref{eq:lengZ1} that its corresponding length in the rescaled flow $\bar M_\tau$ (that Theorem \eqref{GZthm} applies) is
\begin{align}
|y|=\frac{|x|}{\sqrt{-t_1}}=\frac{|z|\sqrt{\tau_1^\frac{1}{10}\cdot(-t_1)}}{\sqrt{-t_1}}\leq&\frac{(|z_1|+2)\sqrt{\tau_1^\frac{1}{10}\cdot(-t_1)}}{\sqrt{-t_1}}\\\nonumber
=&\frac{(\tau_1^\frac{1}{2}+2)\sqrt{\tau_1^\frac{1}{10}\cdot(-t_1)}}{\sqrt{-t_1}}\\\nonumber
=&\sqrt{(\tau_1^\frac{1}{2}+2)^2\cdot\tau_1^\frac{1}{10}}\\\nonumber
\ll&\sqrt{2\tau_1\cdot\tau_1^\frac{1}{10}}\\\nonumber
\leq&\sqrt{100\ln(\tau_1)+9(\tau_1-\tau_0)^\frac{11}{10}}\\\nonumber
=&\Omega(\tau_1),
\end{align}
when $|t_1|$ is small enough (or in other words $\tau_1$ is large enough).

So, we can apply the asymptotic \eqref{asym} in Theorem \ref{GZthm} (see also \eqref{eq:mainP}), which gives that for $s\in [-1,0]$,
\begin{equation}
\begin{split}\label{eq:hradi}
h(z,\theta,s)&=\frac{\sqrt{1-\tau_1^{\frac{1}{10}}s}}{\tau_1^{\frac{1}{20}}}\sqrt{2+\frac{1}{\tau_1}\cdot\Big|\frac{\tau_1^{\frac{1}{20}} \ z_1}{\sqrt{1-\tau_1^{\frac{1}{10}}s}}\Big|^2}\big(1+o(1)\big)\\
&=\frac{\sqrt{1-\tau_1^{\frac{1}{10}}s}}{\tau_1^{\frac{1}{20}}}\sqrt{2+\frac{1}{\tau_1}\cdot \Big|\frac{\tau_1^{\frac{1}{20}} \ \tau_1^{\frac{1}{2}}}{\sqrt{1-\tau_1^{\frac{1}{10}}s}}\Big|^2}\big(1+o(1)\big)\\
&=\frac{\sqrt{1-\tau_1^{\frac{1}{10}}s}}{\tau_1^{\frac{1}{20}}}\sqrt{2 +\frac{\tau_1^{\frac{1}{10}} }{1-\tau_1^{\frac{1}{10}}s}}\big(1+o(1)\big)\\
&=\sqrt{1+\frac{2(1-\tau_1^\frac{1}{10}s)}{\tau_1^\frac{1}{10}}}\big(1+o(1)\big)
\end{split}
\end{equation}  
where we used that $|z_1|\gg 1$ by \eqref{eq:lengZ1}, and thus $|z|=|z_1|(1+o(1))$ when $\big||z|-|z_1|\big|\leq 2$.

When $s=0$ and $\Big||z|-|z_1|\Big|\leq 2$,
\begin{align}
h(z,\theta,0)=1+o(1)\label{eq:h0}.
\end{align} 
And when $s\in [-1,0]$, we get $h\rightarrow\sqrt{1-2s}\in[1,\sqrt3]$ as $\tau_1\rightarrow\infty$ and thus
\begin{align}
\frac{1}{2}\leq h(z,\theta,s)\leq 9
\end{align}
for large enough $\tau_1$.

To control the derivatives of $h$, we observe that, 
\begin{align}
|\nabla_{z}^{l}\partial_{\theta}^{k}h|\ll 1,\ k+|l|=1,2,\label{eq:devh}
\end{align} since the decomposition of $v$ in \eqref{asym}, the estimates on the remainder in \eqref{eq:InftyEst} and the estimates on the scalar functions in \eqref{eq:b123} and \eqref{eq:aaa} imply that
\begin{align}\label{DerivativeBound}
|\nabla_{y}^{l}\partial_{\theta}^{k}v(\cdot,\tau)|\leq \tau^{-\frac{3}{20}},\  k+|l|=1,2.
\end{align}
Higher order derivatives follows by \cite{Ec}, Proposition 2.17.

Recall that, by the parametrization of the hypersurface $\{M_{1,s}\}$ in \eqref{eq:paraM1s}, $h(z,\theta,s)$ can be interpreted as the radius of the hypersurface at the cross section $z$ and at the angle $\theta$. Thus the estimates \eqref{eq:hradi} and \eqref{eq:devh} above imply that, when $s\in [-1,0]$ and $\Big||z_1|-|z|\Big|\leq 2$, the hypersurface is close to a cylinder.

We are ready to use the pseudo-locality property (see Theorem 1.5 of \cite{INS}) as \eqref{DerivativeBound} gives the Lipschitz bound in the conditions: when $s\in[0,\tau_1^{-\frac{1}{10}}]$, which is a small interval since $\tau_1\gg 1$, the flow can be parametrized in $B_{\delta_0}(z_1)$ as a graph $h$ for some uniform $\delta_0>0$ and satisfies
\begin{align}
h(z_1,\theta,s)=h(z_1,\theta,0)\Big(1+o(1)\Big).
\end{align}

Thus we have the desired estimate $$\frac{h(z_1,\theta,s)}{h(z_1,\theta,0)}=1+o(1).$$ 
\end{proof}

\section{Proof of the main theorem}
Now we are ready to prove the main theorem.
\begin{proof}[Proof of the Main Theorem \ref{main}]
We start with simplifying the problem. Lemma \ref{keylemma} shows that, for each fixed $x_1$ with $0<|x_1|\ll 1$, for $t\in [t_1,0)$, with $t_1=t_1(x_1)$ defined by \eqref{eq:defX1} and \eqref{eq:twoScal},
\begin{align}
u(x_1,\theta,0)=&u(x_1,\theta,t)\Big(1+o(1)\Big).\label{eq:relation}
\end{align}

Hence, after passing to the limit $t\rightarrow 0$, to control $u(x_1,\theta,0)$ it suffices to estimate $u(x_1,\theta,t_1).$

To estimate $u(x_1,\theta,t_1)$ we need estimates for $v$ introduced in \eqref{eq:uvRe}, which appears in the rescaled MCF. 

To control $v$ we apply Theorem \ref{GZthm}.
A useful estimate is that: when $|y|\leq 2\tau^{\frac{1}{2}+\frac{1}{20}},$
\begin{align}
v(y,\theta,\tau)=\sqrt{2+\tau^{-1}|y|^2}\big(1+o(1)\big) ,\label{eq:mainP}
\end{align} where $o(1)$ is in the $L_{y,\theta}^{\infty}$-norm. To see this, $v$ is decomposed into two parts
\begin{align}
v(y,\theta,\tau)=\sqrt{\frac{2+y^T\cdot B(\tau)\cdot y}{2a(\tau)}}+\eta(y,\theta,\tau).
\end{align} The first estimate in \eqref{eq:InftyEst} implies that, when $|y|\leq 3\tau^{\frac{1}{2}+\frac{1}{20}}$ and $\tau\gg 1$, then 
\begin{align}
|\eta(y,\theta,\tau)|\leq \left(\sqrt{1+y^2}\right)^{3}\tau^{-2} \leq \tau^{-\frac{1}{10}}.
\end{align} This, together with $B=\tau^{-1}I+O(\tau^{-2})$, $a=\frac{1}{2}+O(\tau^{-1})$ in \eqref{eq:b123} and \eqref{eq:aaa}, implies the desired \eqref{eq:mainP}.

Now since $|y_1|=e^\frac{\tau_1}{2}|x_1|= \tau_1^{\frac{1}{2}+\frac{1}{20}}$ by \eqref{eq:defX1} and $\sqrt{-t_1}=e^{-\frac{\tau_1}{2}}$, we apply the estimate in \eqref{eq:mainP} to obtain,
\begin{align}
u(x_1,\theta,t_1)=&e^{\frac{-\tau_1}{2}}\sqrt{1+\frac{1}{\tau}|y_1|^2}(1+o(1))\nonumber\\
=&e^{\frac{-\tau_1}{2}}\sqrt{1+\frac{1}{\tau_1}(\tau_1^{\frac{1}{2}+\frac{1}{20}})^2}(1+o(1))\nonumber\\
=&e^{\frac{-\tau_1}{2}}\sqrt{1+\tau_1^{\frac{1}{10}}}(1+o(1))\nonumber\\
=&e^{\frac{-\tau_1}{2}}\tau_1^\frac{1}{20}(1+o(1)).\label{eq:ux1t1}
\end{align}

We convert it using the identity $|x_1|=\tau_1^{\frac{1}{2}+\frac{1}{20}}e^\frac{-\tau_1}{2}$ from \eqref{eq:defX1} so that the estimate is in terms of $|x_1|$. Compute directly to find 
\begin{align}
u(x_1,\theta,t_1)=&
\frac{|x_1|}{\tau_1^{\frac{1}{2}}}\big(1+o(1)\big)=\frac{|x_1|}{\sqrt{-2\ln|x_1|}}\big(1+o(1)\big),
\end{align} where, in the last step, we took a logarithm on both sides of $|x_1|=\tau_1^{\frac{1}{2}+\frac{1}{20}}e^\frac{-\tau_1}{2}$ to find
\begin{align*}
-\ln|x_1|= \frac{\tau_1}{2}-\frac{11}{20}\ln \tau_1=\frac{\tau_1}{2}\big(1+o(1)\big),
\end{align*} since $\tau_1\gg 1.$

This, together with \eqref{eq:relation}, implies the desired results.
\end{proof}


\begin{thebibliography}{9}

\bibitem{AK}  Angenent, Sigurd and Knopf, Dan. Precise asymptotics of the Ricci flow neckpinch, Comm. Anal. Geom. 15 (2007), 773-844.
\bibitem{BK} Bamler, Richard and Kleiner Bruce. On the Multiplicity One Conjecture for Mean Curvature Flows of surfaces. Preprint. (2023).
\bibitem{Brendle} Brendle, Simon. Embedded self-similar shrinkers of genus 0. Ann. of Math. 183 (2016), 715-728.
\bibitem{BW} Bernstein, Jacob and Wang, Lu. Topology of closed hypersurfaces of small entropy. Geom. Topol. 22 (2018), 1109-1141.
\bibitem{BWang}  Bernstein, Jacob and Wang, Shengwen. The level set flow of a hypersurface in $R^4$ of low entropy does not disconnect. J. Geom. Anal. 29 (2021),1523-1543.
\bibitem{CCMS1}  Chodosh, Otis; Choi, Kyeongsu; Mantoulidis, Christos and Schulze, Felix. Mean curvature flow with generic initial data. To appear in Invent. Math.
\bibitem{CCMS2}  Chodosh, Otis; Choi, Kyeongsu; Mantoulidis, Christos and Schulze, Felix. Mean curvature flow with generic initial data II. Preprint. (2023).
\bibitem{CHH}  Choi, Kyeongsu; Haslhofer, Robert; Hershkovits, Or. Ancient low entropy flows, mean convex neighborhoods, and uniqueness. Acta Math 228 (2022), 217-301.
\bibitem{CHHW}  Choi, Kyeongsu; Haslhofer, Robert; Hershkovits, Or; White, Brian. Ancient asymptotically cylindrical flows and applications. Invent. Math. 229 (2022), 139-241.
\bibitem{CM}  Colding, Tobias and Minicozzi, William II. Generic mean curvature flow I; generic singularities, Ann. of Math. 175 (2012), 755-833.
\bibitem{CM1}  Colding, Tobias and Minicozzi, William II. Uniqueness of blowups and Lojasiewicz inequalities, Ann. of Math. 182 (2015), 221-285.
\bibitem{D}  Daniels-Holgate, Joshua. Approximation of mean curvature flow with generic singularities by smooth flows with surgery. (2021). Adv. Math. 410 (2022).
\bibitem{H}  Huisken, Gerhard. Asymptotic behaviour for singularities of the mean curvature flow, J. Differential Geom. 31 (1990), 285-299.
\bibitem{HK}  Robert Haslhofer and Bruce Kleiner, Mean curvature flow with surgery, Duke Math J. 166 (2017), 1591-1626.
\bibitem{HS}  Huisken, Gerhard and Sinestrari, Carlo. Mean curvature flow with surgeries of two-convex hypersurfaces, Invent. math. 175 (2009), 137-221.
\bibitem{GZ}  Zhou, Gang. On the Mean Convexity of a Space-and-Time Neighborhood of Generic Singularities Formed by Mean Curvature flow. J. Geom. Anal. 31 (2021), 9819–9890.
\bibitem{Ec}  Ecker, Klaus. Regularity Theory for Mean Curvature Flow. Progress in Nonlinear Differential Equations and Their Applications 57.  Springer (2004).
\bibitem{ES}  Evans, Lawrence and Spruck, Joel, Motion of level sets by mean curvature.  J. Differential Geom. 33 (1991). 635-681.
\bibitem{S}  Natasa, Sesum. Rate of convergence of the mean curvature flow. Comm. Pure Appl. Math. 61 (2008), 464-485.
\bibitem{I2}  Ilmanen, Tom. Singularities of mean curvature flow of surfaces. Preprint. (1995).
\bibitem{INS}  Ilmanen, Tom; Neves, Andre and Schulze, Felix, . On Short Time Existence for the planar network flow.  J. Differential Geom. 111 (2019), 39-89.
\bibitem{MW}  Mramor, Alexendar and Wang, Shengwen. Low Entropy and the Mean Curvature Flow with Surgery. Calc. Var. Partial Differential Equations. 60 (2021).
\bibitem{W1}  White, Brian. Topological change in mean convex mean curvature flow. Invent. math. 191 (2013), 501-525.
\bibitem{SX} Sun, Ao and Xue Jinxin. Generic mean curvature flows with cylindrical singularities. Preprint. (2022).
\end{thebibliography}
\end{document}